\title{$K_{5,5}$ is fully reconstructible in $\CC^3$}
\author{Daniel Irving Bernstein}
\address{Tulane University Department of Mathematics}
\email{dbernstein1@tulane.edu}
\author{Steven J. Gortler}
\address{Harvard University School of Engineering and Applied Sciences}
\email{sjg@cs.harvard.edu}
\tikzstyle{vertex}=[circle, draw, inner sep=0pt,minimum size=6pt, fill=black]
\theoremstyle{definition}
\newtheorem{theorem}{Theorem}[section]
\newtheorem{proposition}[theorem]{Proposition}
\newtheorem{lemma}[theorem]{Lemma}
\newtheorem*{lemma*}{Lemma}
\newtheorem*{lemma''*}{``Lemma''}
\newtheorem*{problem*}{Problem}
\newtheorem{definition}[theorem]{Definition}
\theoremstyle{remark}
\newtheorem*{claim*}{Claim}
\newcommand{\RR}{\mathbb{R}}
\newcommand{\CC}{\mathbb{C}}
\newcommand{\PP}{\mathbb{P}}
\def\p{{\bf p}}
\def\x{{\bf x}}
\newcommand{\defn}[1]{\textcolor{blue}{\emph{#1}}}
\begin{document}

\begin{abstract}
    A graph $G$ is fully reconstructible in $\mathbb{C}^d$ if the graph is determined from its $d$-dimensional
    measurement variety. 
    The full reconstructibility problem has been solved for $d=1$ and $d=2$.
    For $d=3$, some necessary and some sufficient conditions are known and $K_{5,5}$ falls squarely within 
    the gap in the theory. In this paper, we show that $K_{5,5}$ is fully reconstructible in $\mathbb{C}^3$.
\end{abstract}\textbf{}

\maketitle

\section{Introduction}
Let $G$ be a graph with $n$ vertices and $m$ edges. 
Let $d$ denote
a fixed dimension. Associated with with $G$ is  its so-called
$d$-dimensional
measurement variety $M_{d,G} \subset \CC^m$.
We will define this below,
but roughly speaking, it represents all possible, squared edge length 
measurements,  over all $d$-dimensional configurations of $n$ points in $\CC^d$. 
We say that $G$ is fully reconstructible in $\CC^d$ if
$M_{d,G}$ fully determines $G$, as well as how
the edges in $G$ correspond to the coordinate axes in $\CC^m$, modulo graph isomorphism.

A natural question is to  characterize which graphs are fully
reconstructible in $\CC^d$. This and related questions
have been introduced and studied in~\cite{gtt,gj,ggj}.
One motivation for studying reconstructibility comes
from the realization problem of trying to determine a configuration of 
(generic) points given an unlabeled set of some of the 
point-pair distances (as well as the values of $d$ and $n$).
Characterizations for 
full reconstructibility are known for $d=1$ and $d=2$.
At present, some sufficient and some necessary conditions
for full reconstructibility are known for $d>2$, but nothing tight.
The complete bipartite graph, $K_{5,5}$ in $3$ dimensions falls squarely
into this gap, and so its resolution might help towards a 
better understanding of full reconstructibility.
In this paper, we show that $K_{5,5}$ is fully reconstructible in
$\CC^3$. 

\subsection{Definitions}

We summarize the necessary definitions from~\cite{ggj}.
\begin{definition}
We define a $d$-dimensional \defn{complex framework} to be a pair $(G,\p)$, where $G = (V,E)$ is a graph with $n$ vertices,
and $\p : V \rightarrow \CC^d$ is a complex mapping.
It is \defn{generic} if its coordinates  do not satisfy any algebraic equation with rational coefficients.
Given an edge $e = uv$ of $G$, its \textit{complex squared length} in $(G,\p)$ is 
\begin{equation*}
    m_{uv}(\p) = \left(\p(u) - \p(v)\right)^{T} \cdot \left(\p(u) - \p(v)\right) =  \sum_{k=1}^d (\p(u)_k - \p(v)_k)^2,
\end{equation*}
where $k$ indexes over the $d$ dimension-coordinates. Note that in this definition we do not use conjugation. For real frameworks, this coincides with the
usual (Euclidean) squared length.
We define  $m_{d,G}$,  
to be the function, 
$\CC^{nd} \rightarrow \CC^E$, with 
\begin{equation*}
m_{d,G}(\p) = \big(m_{uv}(\p)\big)_{uv \in E}.
\end{equation*}
\end{definition}

\begin{definition}
The \defn{$d$-dimensional measurement variety} of a graph $G$ (on $n$ vertices), denoted by $M_{d,G}$, is the Zariski-closure of $m_{d,G}(\CC^{nd})$.
\end{definition}
A measurement variety is irreducible.
Its dimension matches that of the generic rank of
the linearization of $m_{d,G}$.
If $G$ and $H$ are graphs, let $E(G)$ and $E(H)$ denote their edge sets and $V(G)$ and $V(H)$ their vertex sets. Given a bijection $\psi:E(G)\rightarrow E(H)$, there is a dual map $\psi':\mathbb{C}^{E(H)}\rightarrow \mathbb{C}^{E(G)}$ sending $f$ to $f\circ \psi$.
Since $M_{d,G}$ is a subset of $\mathbb{C}^{E(G)}$ and $M_{d,H}$ is a subset of $\mathbb{C}^{E(H)}$, we can ask if $M_{d,G} = \psi'(M_{d,H})$.
We say that an edge bijection $\psi:E(G)\rightarrow E(H)$ is \emph{induced by the graph isomorphism $\tau: V(G)\rightarrow V(H)$} if $\psi(uv) = \tau(u)\tau(v)$ for all edges $uv \in E(G)$.

\begin{definition}\label{def:graphrec}
A graph $G$ with no isolated vertices
is said to be \defn{fully reconstructible} in
$\CC^d$  if for all graphs $H$ with no isolated vertices and all edge bijections $\psi:E(G)\rightarrow E(H)$, if $M_{d,G} = \psi'(M_{d,H})$, then $\psi$ is induced by a graph isomorphism.
\end{definition}

In~\cite{ggj}, a different definition for full reconstructibility is given, but then, 
in their Theorem~2.18, 
they show this
to be equivalent to the definition we give above.

\begin{definition}
    Let $K_n$ denote the complete graph on $n$ vertices.
    The \defn{d-dimensional rigidity matroid} is the matroid of linear independence on the rows of the Jacobian matrix of $m_{d,K_n}$ at a generic point.
\end{definition}

The ground set of the $d$-dimensional rigidity matroid is in natural bijection with the edge set of $K_n$ and a graph $G$
on $n$ vertices is rigid in $d$-dimensions
iff its edge set is a spanning set of the $d$-dimensional rigidity matroid (i.e.~if the submatrix of the Jacobian (at a generic framework) 
of $m_{d,K_n}$,
restricted to the rows corresponding to edges of $G$, has rank $dn-\binom{d+1}{2}$).
We say that $G$ is $d$-rigid if it is generically rigid in $d$ dimensions, and that $G$ is a $d$-circuit if its edge set is a circuit of the $d$-dimensional rigidity matroid.
In what follows, we will make repeated tacit use of the following lemma.

\begin{lemma}\label{lem:dCircuit}
     $G$ is a $d$-circuit if and only if $M_{d,G}$ is a hypersurface and $M_{d,H}$ is full-dimensional in its ambient space when $H$ is a proper subgraph of $G$.
\end{lemma}
\begin{proof}
    This follows immediately from the fact that the $d$-dimensional rigidity matrix of $G$ is the Jacobian of $m_{d,G}$, that the row-submatrix corresponding to $H$ is the Jacobian of $m_{d,H}$, and that the rank of the Jacobian of a polynomial map is the dimension of the Zariski closure of its image.
\end{proof}

See also~\cite[Theorem~11]{rosen2020algebraic} for the algebraic statement and proof of the above lemma.

\subsection{Previous Results}
We summarize the known results on full reconstructibility
from~\cite{gtt, gj, ggj}. Most of the following conditions rely on various
notions from graph rigidity, which we will not review here;
all are defined in~\cite{ggj}.

For dimensions $1$ and $2$, full reconstructibility is
well characterized.

\begin{theorem}[{\cite[Theorem 2.20]{ggj}}]
    Let $G$ be a graph without isolated vertices.
    If $G$ has at least three vertices, then $G$ is fully reconstructible in $\mathbb{C}^1$ iff $G$ is $3$-connected.
    If $G$ has at least four vertices, then $G$ is fully reconstructible in $\mathbb{C}^2$ iff $G$ is generically globally rigid in two dimensions.
\end{theorem}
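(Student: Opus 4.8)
The plan is to identify, for each dimension, the combinatorial invariant of $G$ recorded by $M_{d,G}$, and then to determine when that invariant pins $G$ down up to isomorphism. \textbf{The $d=1$ case.} For $e = uv$ write $w_e = \p(u) - \p(v)$, so that $m_e(\p) = w_e^2$ and, as $\p$ ranges over $\CC^n$, the vector $(w_e)_{e}$ ranges over the cut (cocycle) space of $G$ inside $\CC^E$. Thus $M_{1,G}$ is the Zariski closure of the image of this linear space under coordinatewise squaring. Coordinatewise squaring is finite and dominant, so the algebraic matroid of $M_{1,G}$ equals the matroid of linear independence on the cut space, namely the cographic matroid of $G$; dualizing, $M_{1,G}$ determines the cycle matroid of $G$ through $\psi$. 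Conversely, squaring annihilates the sign changes that a Whitney twist introduces into the cut space, so two $2$-isomorphic graphs have literally equal measurement varieties under the matching edge bijection. Hence $M_{1,G} = M_{1,H}$ via $\psi$ iff $G$ and $H$ are $2$-isomorphic via $\psi$, and Whitney's $2$-isomorphism theorem says the cycle matroid determines a graph with no isolated vertices on at least three vertices up to isomorphism exactly when the graph is $3$-connected. This gives the first statement.

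\textbf{The $d=2$ case, matroid layer.} The same computation identifies the algebraic matroid of $M_{2,G}$ with the generic $2$-dimensional rigidity matroid: its rank is $\dim M_{2,G}$, the generic rank of the linearization of $m_{2,G}$. So $M_{2,G} = M_{2,H}$ via $\psi$ already forces $G$ and $H$ to share the $2$-rigidity matroid, hence the same coloop structure. Since two-dimensional generic global rigidity equals $3$-connectivity together with redundant rigidity (the Jackson--Jord\'an characterization, in the complex setting), and redundant rigidity --- every edge lying in a rigidity circuit --- is a property of the rigidity matroid alone, this layer already supplies the redundant-rigidity half of the characterization: a rigid but not redundantly rigid $G$ has a coloop edge that can be detached and reattached elsewhere to produce a non-isomorphic $H$ with the same matroid and variety.

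\textbf{The $d=2$ case, connectivity layer.} What remains is to extract $3$-connectivity, which is not a matroid invariant, from the finer geometry of $M_{2,G}$. For sufficiency I would use that a generically globally rigid $G$ has a generic measurement fibre consisting of a single framework modulo the complex isometry group of the bilinear form $\sum_k z_k^2$; then $M_{2,G}$ is essentially the quotient of configuration space by this group, and one argues that any $H$ with $M_{2,H} = M_{2,G}$ must realize precisely the constraints of $G$, forcing $\psi$ to come from a graph isomorphism. For necessity, if $G$ has a $2$-separation with cut $\{a,b\}$, the complex reflection fixing the line through $a$ and $b$ is an isometry of this form, so the Whitney-type twist of one side about $\{a,b\}$ yields a graph $H$, generically non-isomorphic to $G$, whose frameworks realize the identical set of measurements; thus $M_{2,H} = M_{2,G}$ and $G$ is not fully reconstructible.

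\textbf{Main obstacle.} The crux is this connectivity layer in $d=2$: converting the equality of varieties into a statement about which vertex pairs are forced (globally linked) and proving that this data recovers the edge set exactly when $G$ is $3$-connected and redundantly rigid. This needs the complex analogue of the Connelly / Gortler--Healy--Thurston stress-matrix criterion for generic global rigidity and care with the non-conjugated bilinear form, whose isometry group and reflections differ from the real orthogonal picture. By comparison, the $d=1$ case reduces cleanly to Whitney's theorem once the cut-space-plus-squaring description is in place.
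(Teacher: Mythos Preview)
The paper does not prove this theorem: it is quoted verbatim as \cite[Theorem 2.20]{ggj} and used as background, with no argument supplied here. So there is no ``paper's own proof'' to compare your attempt against; the result you are trying to establish is imported wholesale from the cited reference.

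That said, your sketch is broadly in the right spirit for $d=1$ --- the squaring-of-the-cut-space description and the reduction to Whitney's $2$-isomorphism theorem is indeed how this is handled in the literature --- though you should be a little more careful in the necessity direction: you need that whenever $G$ (with $\ge 3$ vertices, no isolated vertices) is not $3$-connected, some Whitney twist yields an edge bijection that is \emph{not} induced by a graph isomorphism, and this is not automatic from the existence of a $2$-separation alone (the twist can happen to coincide with an automorphism). For $d=2$ you have correctly located the real work and essentially deferred it: the sufficiency side genuinely requires the complex global-rigidity machinery (stress matrices, fibre uniqueness) developed in \cite{ggj} and its predecessors, and your paragraph does not supply that argument so much as name it. Since the present paper treats the theorem as a black box, there is nothing here to benchmark your outline against; if you want a proof, you will need to go to \cite{ggj} directly.
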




For general dimensions, we have the following necessary conditions.

\begin{theorem}[{\cite[Theorem 5.15]{gj} and~\cite[Theorem 3.7]{ggj}}]\label{thm:fullyrecmconnected}
Let $G$ be a graph without isolated vertices and suppose that $G$ is fully reconstructible in $\CC^d$. Then the $d$-dimensional rigidity matroid of $G$ is a connected matroid and $G$ is $3$-connected.
\end{theorem}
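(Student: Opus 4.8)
The plan is to prove each conclusion by contraposition: assuming either that the $d$-dimensional rigidity matroid of $G$ is disconnected, or that $G$ is not $3$-connected, I will produce a graph $H$ without isolated vertices and an edge bijection $\psi\colon E(G)\to E(H)$ with $M_{d,H}=\psi(M_{d,G})$ such that $\psi$ is \emph{not} induced by a graph isomorphism. The quantitative input I will use throughout is the identity, recalled above, that $\dim M_{d,G}$ equals the rank of the rigidity matroid restricted to $E(G)$ (equivalently, the generic rank of the Jacobian of $m_{d,G}$), together with two soft facts from algebraic geometry: a product of irreducible complex varieties is irreducible, and an irreducible variety contained in an irreducible variety of the same dimension coincides with it.

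Suppose first the rigidity matroid of $G$ is disconnected, so $E(G)=E_1\sqcup E_2$ with both parts nonempty and $\operatorname{rank}(E_1)+\operatorname{rank}(E_2)=\operatorname{rank}(E(G))$. Let $G_i$ be the subgraph spanned by $E_i$. The projection of $M_{d,G}$ onto the $E_i$-coordinates lies inside $M_{d,G_i}$, so $M_{d,G}\subseteq M_{d,G_1}\times M_{d,G_2}$; since $\dim M_{d,G}=\operatorname{rank}(E_1)+\operatorname{rank}(E_2)=\dim M_{d,G_1}+\dim M_{d,G_2}$ and the product is irreducible, equality holds. I then change how the two pieces are attached: if $G_1$ and $G_2$ share a vertex I pull them apart into a disjoint union on disjoint vertex sets, while if they are already disjoint I identify one vertex of $G_1$ with one vertex of $G_2$. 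Either way the evident edge bijection $\psi$ satisfies $M_{d,H}=M_{d,G_1}\times M_{d,G_2}=M_{d,G}$: for the disjoint union this is immediate since a framework on $H$ is just an independent pair of frameworks, and for the one-vertex amalgam it follows from the observation that every vector in the row space of a rigidity matrix has its vertex-blocks summing to zero, so a vector supported on the $d$ columns of the single shared vertex that lies in the row spaces of both pieces must vanish — hence the ranks add. Since $|V(H)|\neq|V(G)|$ in both cases, $\psi$ is not induced by an isomorphism, and $G$ is not fully reconstructible. The same one-vertex-amalgam computation shows that a cut vertex of $G$ forces the rigidity matroid to decompose, so it remains only to handle a $G$ that is $2$-connected but has a $2$-vertex cut (together with the trivial cases $|V(G)|\le 3$).

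So let $G$ be $2$-connected with a $2$-cut $\{a,b\}$. Split the components of $G-\{a,b\}$ into two nonempty families with vertex sets $V_1'$ and $V_2'$, put $V_i=V_i'\cup\{a,b\}$, and let $G_i=G[V_i]$, so that $G=G_1\cup G_2$, $V(G_1)\cap V(G_2)=\{a,b\}$, and, by $2$-connectedness, each of $a,b$ has a neighbor in both $V_1'$ and $V_2'$ (and $G_2$ is connected on at least three vertices). Let $\tau$ be the transposition of $a$ and $b$ on $V_1$, let $H=\tau(G_1)\cup G_2$ glued along $\{a,b\}$, and let $\psi$ act on $E(G_1)$ via $\tau$ and on $E(G_2)$ as the identity. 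To see $M_{d,G}=M_{d,H}$, I take a configuration $\p$ with $\p(a)-\p(b)$ non-isotropic for the complex bilinear form (a generic, hence Zariski-dense, condition) and let $\phi$ be the composition of the Householder reflection fixing the hyperplane orthogonal to $\p(a)-\p(b)$ with the translation sending $\phi(\p(a))$ to $\p(b)$; a short computation shows $\phi$ is a squared-distance-preserving affine map interchanging $\p(a)$ and $\p(b)$. Setting $\q=\p$ on $V_2$ and $\q=\phi\circ\p$ on $V_1'$ produces a configuration of $H$ whose edge measurements are exactly $\psi$ applied to those of $\p$; passing to Zariski closures yields $\psi(M_{d,G})\subseteq M_{d,H}$, and since $\tau^2=\mathrm{id}$ the construction is symmetric, giving equality. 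Finally $\psi$ is not induced by an isomorphism: $\psi$ is the identity on the edges of $G_2$, which is connected on at least three vertices, so any realizing vertex map must fix $V_2$ pointwise, in particular fix $a$; but $\psi$ carries an edge $ax$ with $x\in V_1'$ to $bx$, which no map fixing $a$ can realize.

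I expect the genuine difficulty to lie in this $2$-cut case. Producing the isometry $\phi$ and checking that edge lengths transform correctly is routine, but it takes care to push the set-theoretic equality $M_{d,G}=M_{d,H}$ to the level of Zariski closures, and, separately, to argue that the flipped bijection $\psi$ is not realized by \emph{any} graph isomorphism — this last point is where $2$-connectedness is used essentially, and an analogous vigilance is needed in the matroid-disconnected case to rule out accidental isomorphisms. The low-vertex cases ($K_2$, $K_3$) must be checked by hand or excluded by a mild hypothesis on $|V(G)|$, reading "$3$-connected" with the customary convention for small complete graphs.
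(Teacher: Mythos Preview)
The paper does not supply its own proof of this theorem: it is quoted from \cite{gj} and \cite{ggj} and used only as background. So there is nothing in the present paper to compare your argument against, and the relevant question is simply whether your proposal stands on its own.

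It does. Both halves of your contrapositive argument are sound. In the matroid-disconnected case your dimension count gives $M_{d,G}=M_{d,G_1}\times M_{d,G_2}$, and the row-space observation (that vertex blocks of any row-space vector sum to zero) correctly shows that a one-vertex amalgam still has additive rank, so your regluing produces an $H$ with $|V(H)|\neq|V(G)|$ and the same measurement variety. In the $2$-cut case the complex ``perpendicular bisector'' reflection (well defined exactly when $\p(a)-\p(b)$ is non-isotropic, a Zariski-open condition) yields $\psi(M_{d,G})\subseteq M_{d,H}$ on a dense set, hence on closures, and symmetry gives equality; your argument that any vertex bijection inducing $\psi$ must fix $V(G_2)$ pointwise, and hence cannot realize $ax\mapsto bx$, is correct once one notes that $G_2$ is connected with at least three vertices and no isolated vertices (both of which follow from $2$-connectedness of $G$, as you indicate). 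The only residual looseness is the one you already flag: the small-graph edge cases, and the convention under which $K_2$ or $K_3$ counts as ``$3$-connected''. This is a matter of stating hypotheses rather than a gap in the argument.
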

It is known that these conditions are not sufficient.
In particular it was shown in \cite[Example 4.6]{ggj} that there is a graph
that is $3$-connected, and has a connected rigidity 
matroid in $3$ dimensions, 
but is not  fully reconstructible in $\CC^3$. In fact, that graph is
also $4$-connected and
redundantly rigid in $3$ dimensions.
$K_{5,5}$ is $5$-connected and has a 
connected matroid in $3$ dimensions.
So it is a candidate for full reconstructibility.

For general dimensions, we have the following sufficient conditions.

\begin{theorem}[{\cite[Theorem 3.6]{ggj}}]\label{theorem:fullyreconstructible}
Let $d \geq 2$ and let $G$ be a graph on $n \geq d+2$ vertices that is generically globally rigid in $d$ dimensions. 
Then $G$ is fully reconstructible in $\CC^d$.
\end{theorem}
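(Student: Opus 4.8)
The plan is to show that the equality $M_{d,G}=M_{d,H}$ under an edge bijection $\psi$ forces us to reconstruct the vertex set and the edge--vertex incidences of $G$ purely from the intrinsic geometry of the variety $M_{d,G}\subseteq\CC^E$, and that $\psi$ respects this reconstruction. First I would record the matroidal consequence of the hypothesis: for any $F\subseteq E$ the dimension of the projection of $M_{d,G}$ to $\CC^F$ equals the generic rank of the Jacobian of $m_{d,G}$ restricted to the rows in $F$, so the algebraic matroid of $M_{d,G}$ on its coordinates is exactly the $d$-dimensional rigidity matroid restricted to $E(G)$. Hence $\psi$ is an isomorphism of rigidity matroids, and since $G$ is generically globally rigid it is rigid, giving $\dim M_{d,G}=dn-\binom{d+1}{2}$. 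This forces the edge set of $H$ to have rigidity-rank $dn-\binom{d+1}{2}$; together with the no-isolated-vertices hypothesis and a dimension count, I would argue that $H$ is $d$-rigid on the same number $n$ of vertices (ruling out a flexible $H$ on more vertices is discussed below as part of the main obstacle).

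Next I would extract the stress data, which is where global rigidity enters. At a generic $\ell\in M_{d,G}=M_{d,H}$ with realizations $\p$ of $G$ and $\q$ of $H$, the normal space to the variety with respect to the standard bilinear form on $\CC^E$ is the common self-stress space $S$, of dimension $m-\big(dn-\binom{d+1}{2}\big)$, so every $\omega\in S$ is simultaneously an equilibrium stress of $(G,\p)$ and of $(H,\q)$. The complex analogue of the Gortler--Healy--Thurston characterization of generic global rigidity guarantees an $\omega\in S$ whose stress matrix $\Omega_G(\omega)$ has maximal rank $n-d-1$, with kernel spanned by the coordinate rows of $\p$ and by $\mathbf{1}$. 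The crucial point is that the stress matrix is intrinsic to the variety: $\Omega_G(\omega)$ is the Hessian at $\p$ of the function $\p\mapsto\langle\omega,m_{d,G}(\p)\rangle$, and since $\omega$ is normal this descends to the second fundamental form of $M_{d,G}$ at $\ell$ in the direction $\omega$. Thus the maximal rank $n-d-1$ and the kernel recovering the configuration $\p$ (up to affine transformation, then up to complex isometry using the length data) are readable from $M_{d,G}$ alone.

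From the recovered generic configuration I would then read off the incidence structure: the vertex set is the index set of the reconstructed points, and the edges are the measured pairs, so carrying this out for $M_{d,G}$ and for $M_{d,H}$ and matching through $\psi$ exhibits $\psi$ as induced by a graph isomorphism. The hard part, and the genuine heart of the argument, is the reconstruction of vertex-level data from the edge-indexed space $\CC^E$: namely (i) showing that the stress matrix / second-fundamental-form data determines the configuration and the edge--vertex incidences canonically, independent of the graph presentation, so that $G$ and $H$ yield the \emph{same} reconstruction, and (ii) excluding a spurious $H$ on $n'>n$ vertices, where the maximal stress-matrix rank could a priori drop below $n'-d-1$ and remain consistent with the intrinsic rank $n-d-1$. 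I expect (i) and (ii) to require the full strength of genericity together with the maximal-rank stress supplied by generic global rigidity, and to be where essentially all the work lies.
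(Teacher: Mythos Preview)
The paper does not prove this theorem; it is quoted from \cite{ggj} as background (their Theorem~3.6) and used as a black box. So there is no proof in this paper to compare your proposal against.

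That said, your outline is broadly in the right spirit and touches the expected ingredients: the algebraic matroid of $M_{d,G}$ recovers the rigidity matroid, the normal space at a smooth point is the stress space, and generic global rigidity supplies a maximal-rank stress via the Gortler--Healy--Thurston characterization. The idea of reading the stress matrix as an intrinsic second-fundamental-form datum of the variety is also the right instinct. However, as you yourself acknowledge, steps (i) and (ii) at the end are where the argument actually lives, and you have not carried them out. In particular, the claim that the stress matrix ``is intrinsic to the variety'' needs care: $\Omega_G(\omega)$ is an $n\times n$ matrix indexed by vertices, while the second fundamental form is indexed by tangent directions in $\CC^E$; making precise how one recovers the vertex set, the $d$-fold block structure, and the edge--vertex incidences from the edge-indexed data without already knowing $G$ is exactly the nontrivial content. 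Likewise, ruling out a rival $H$ on $n'>n$ vertices is not a throwaway: one must argue that such an $H$ would be flexible yet have the same variety, and derive a contradiction from the stress-rank information. As written, your proposal is a plausible roadmap rather than a proof; the substantive work remains to be done, and for the actual argument you should consult \cite{ggj}.
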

Generic global rigidity is known not to be sufficient for 
full reconstructibility.
In particular, one can create full reconstructibility
by a gluing over a small set.
\begin{theorem}[{\cite[Theorem 4.7]{ggj}}]\label{thm:gluingstronglyreconstructible}
Let $G = (V,E)$ be a graph with induced subgraphs $G_1 = (V_1,E_1)$ and $G_2 = (V_2,E_2)$ for which $V_1 \cup V_2 = V$ and $V_1 \cap V_2$ induces a connected subgraph of $G$ on at least three vertices. Let $d \geq 1$. If $G_1$ and $G_2$ are fully reconstructible rigid graphs on at least $d+1$ vertices in $\CC^d$,
then $G$ is fully reconstructible in $\CC^d$.
\end{theorem}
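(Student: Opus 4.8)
The plan is to recover the two pieces $G_1$ and $G_2$ from coordinate projections of $M_{d,G}$, use their full reconstructibility to identify each piece inside the mystery graph $H$ up to isomorphism, and then glue the two isomorphisms together; the rigidity hypotheses enter only at the very last step. So suppose $M_{d,G}=M_{d,H}$ under an edge bijection $\psi\colon E\to E(H)$, and we must produce a graph isomorphism inducing $\psi$.

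The starting observation is elementary: for any $S\subseteq E$, if $\pi_S\colon\CC^E\to\CC^S$ is the coordinate projection, then $\overline{\pi_S(M_{d,G})}=M_{d,(V,S)}$, directly from the formula defining $m_{d,G}$. Applying this with $S=E_1$ and with $S=\psi(E_1)$, and writing $H_i$ for the subgraph of $H$ spanned by the edge set $\psi(E_i)$, we get $M_{d,G_1}=M_{d,H_1}$ under the edge bijection $\psi|_{E_1}$, and likewise $M_{d,G_2}=M_{d,H_2}$ under $\psi|_{E_2}$. Since $G_1$ and $G_2$ are fully reconstructible in $\CC^d$, the maps $\psi|_{E_1}$ and $\psi|_{E_2}$ are induced by graph isomorphisms $\phi_1\colon G_1\to H_1$ and $\phi_2\colon G_2\to H_2$.

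Next I would carry out the combinatorial gluing. From $\psi(E_1)\cup\psi(E_2)=\psi(E)=E(H)$ we get $H=H_1\cup H_2$, and comparing the edge counts $|E(H)|=|E_1|+|E_2|-|E_1\cap E_2|$ with $|\psi(E_1)|+|\psi(E_2)|-|\psi(E_1)\cap\psi(E_2)|$, using $\psi(E_1\cap E_2)\subseteq\psi(E_1)\cap\psi(E_2)$, forces $\psi(E_1\cap E_2)=E(H_1)\cap E(H_2)$. Because $G[V_1\cap V_2]$ is connected on at least three vertices it has no isolated vertex, so $V_1\cap V_2$ is exactly its vertex set, and $\phi_1,\phi_2$ each restrict to an isomorphism from $G[V_1\cap V_2]$ onto the common subgraph $H_1\cap H_2$. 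A short lemma --- on a connected graph with at least three vertices a vertex bijection is determined by the edge bijection it induces, proved by propagating the vertex map along edges leaving a vertex of degree at least two --- then forces $\phi_1$ and $\phi_2$ to agree on $V_1\cap V_2$, so they combine into a single map $\phi\colon V\to V(H)$ extending both, which carries edges of $G$ to edges of $H$ and induces $\psi$.

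The substantive point, and the one place where it matters that $G_1,G_2$ are \emph{rigid} on \emph{at least $d+1$ vertices}, is that $\phi$ is injective: a priori a vertex of $H_1$ lying on no common edge could be identified in $H$ with such a vertex of $H_2$, making $H$ have fewer vertices than $G$. To exclude this I would compare dimensions of measurement varieties. A generic point of $M_{d,G}$ lifts to a framework, and since $G_1,G_2$ are rigid with trivial generic stabilizer in the group of complex congruences (this is where ``at least $d+1$ vertices'' is used), analyzing the fiber of the measurement map over such a point --- a free placement of the copy of $G_1$ together with a coset of the stabilizer of the $k$ generic shared points --- shows the generic fiber has dimension $\binom{d+1}{2}+\binom{d-k+1}{2}$ with $k=|V_1\cap V_2|$ and the convention $\binom{j}{2}=0$ for $j<2$, hence $\dim M_{d,G}=(|V_1|+|V_2|-k)d-\binom{d+1}{2}-\binom{d-k+1}{2}$. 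The identical formula applies to $H=H_1\cup H_2$ with $k$ replaced by $k':=|V(H_1)\cap V(H_2)|\geq k$. Equating the two dimensions gives $kd+\binom{d-k+1}{2}=k'd+\binom{d-k'+1}{2}$, and since $j\mapsto jd+\binom{d-j+1}{2}$ is strictly increasing (successive differences equal $j$), we conclude $k'=k$; therefore $V(H_1)\cap V(H_2)=\phi(V_1\cap V_2)$, so $\phi$ is injective and is the desired graph isomorphism. I expect this dimension count for the glued measurement variety to be the main obstacle; the remaining steps are bookkeeping.
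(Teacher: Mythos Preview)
This theorem is quoted from \cite{ggj} and the present paper gives no proof of it, so there is nothing here to compare your argument against. That said, your outline is essentially a correct proof along the expected lines: recover $M_{d,G_i}$ as the closure of a coordinate projection of $M_{d,G}$, invoke full reconstructibility of each piece to obtain vertex isomorphisms $\phi_i$, use connectedness of $G[V_1\cap V_2]$ on at least three vertices to force $\phi_1$ and $\phi_2$ to agree on the overlap, and finally rule out extra identifications in $H$ by a dimension count on the measurement variety of a union of two rigid pieces.

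Two small comments. First, your stated formula for the successive differences of $f(j)=jd+\binom{d-j+1}{2}$ is only valid for $j\le d$; once $j\ge d$ both binomial terms vanish and the difference is $d$, not $j$. This does not harm the argument, since the differences are still strictly positive for $j\ge 1$ and hence $f$ is strictly increasing on the relevant range, giving $k'=k$ as you want. Second, in the fiber-dimension step you are implicitly using that a generic configuration of $H$ restricted to $H_1$ (resp.\ $H_2$) is again generic enough for rigidity and for the $k'$ shared vertices to be in affinely general position; this is routine but is where ``rigid on at least $d+1$ vertices'' is actually consumed, and it would be worth saying so explicitly.
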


To date, all known fully reconstructible graphs are generically globally rigid, or built by this gluing construction.
On the other hand,  $K_{5,5}$ is neither generically globally
rigid in $3$ dimensions, nor obtainable by the above gluing construction.
Full reconstructibility of $K_{5,5}$ implies that these sufficient conditions are not necessary.

\section{Main Result}
The main result we will prove here is the following.

\begin{theorem}
\label{thm:main}
$K_{5,5}$ is fully reconstructible in $\CC^3$.
\end{theorem}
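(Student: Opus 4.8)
The plan is to show that whenever $M_{3,K_{5,5}} = M_{3,H}$ under an edge bijection $\psi$, the graph $H$ must be isomorphic to $K_{5,5}$ with $\psi$ induced by that isomorphism. Since $K_{5,5}$ has $25$ edges, $H$ also has $25$ edges; and by Theorem~\ref{thm:fullyrecmconnected} the necessary conditions ($3$-connectedness, connected $3$-dimensional rigidity matroid) give us strong structural control on $H$. The first step is to pin down the dimension of $M_{3,K_{5,5}}$ and its behavior under deletion: one computes the generic rank of the rigidity matroid restricted to $K_{5,5}$, identifies which subgraphs are $3$-circuits, and records the codimension of $M_{3,K_{5,5}}$ in $\CC^{25}$. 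This invariant — together with the combinatorics of which coordinate subspaces meet the variety in lower dimension — is exactly what an edge bijection $\psi$ must preserve, so it severely constrains the degree sequence and circuit structure of $H$.

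The second step is to split into the three cases advertised before the theorem. The natural trichotomy is by how the $3$-rigidity rank of $K_{5,5}$ compares to that of $H$, or equivalently by the rank deficiency / number of independent circuits supported on the $25$ edges: presumably one ``generic'' case where $H$ is forced to be bipartite-like and two more delicate cases. In the main case I would argue directly that the matroidal and connectivity constraints, plus the requirement that $M_{3,H}$ and $M_{3,K_{5,5}}$ agree as subvarieties of $\CC^{25}$ (not just have the same dimension), force $H \cong K_{5,5}$; the key leverage is that the measurement variety remembers, for each coordinate subspace, whether the corresponding subgraph is independent, a circuit, or dependent with larger corank, and $K_{5,5}$'s lattice of such subgraphs is rigid enough to be reconstructed from this data. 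The remaining two cases are where a priori $H$ could differ from $K_{5,5}$ in a controlled way, and these are handled by a computer enumeration: one lists all graphs on $25$ edges (or all $H$ passing the necessary conditions with the right matroid invariants) and checks, for each candidate, that $M_{3,H} \neq M_{3,K_{5,5}}$ under every edge bijection — typically by exhibiting a polynomial vanishing on one variety but not the other, or by comparing ideals/dimensions of coordinate sections.

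The main obstacle I expect is the enumeration step: the space of candidate graphs $H$, even after imposing $25$ edges, $3$-connectedness, and a connected $3$-rigidity matroid, is large, and for each one we must certify inequality of measurement varieties, which means computing (or bounding) the dimension of an image of a polynomial map, or finding a defining equation — both of which are expensive in $\CC^{3 \cdot n}$. Making this feasible will require reducing the candidate list using cheap invariants first (degree sequence, number and type of $3$-circuits, ranks of small minors), so that only a handful of genuinely ambiguous $H$ remain for the full symbolic test. A secondary difficulty is the ``direct'' main case: proving that the measurement variety of $K_{5,5}$ cannot coincide with that of any other $25$-edge graph may need a specific algebraic feature of $M_{3,K_{5,5}}$ — e.g.\ its defining polynomial (the ``$K_{5,5}$ invariant'') and how its monomial support encodes the bipartition — and verifying that this feature is not shared by, nor transportable via $\psi$ to, any competitor.
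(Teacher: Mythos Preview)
Your plan has the right overall shape but two concrete gaps.

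First, the proposed trichotomy ``by how the $3$-rigidity rank of $K_{5,5}$ compares to that of $H$'' collapses: since $M_{3,K_{5,5}}$ is a hypersurface in $\CC^{25}$ all of whose coordinate projections are full-dimensional, any $H$ with $M_{3,H}=M_{3,K_{5,5}}$ is forced to be a $3$-circuit on $25$ edges, hence has the \emph{same} rank and corank as $K_{5,5}$. The paper's three cases are instead organized by the number of vertices of $H$: (i) $H=K_{5,5}$ itself, where one must rule out nontrivial permutation automorphisms of the variety (this is the place where your ``monomial support of the defining polynomial'' idea is actually used, via an explicit $6\times 6$ Cayley--Menger minor); (ii) $H$ is some other $3$-rigid $3$-circuit on $10$ vertices; (iii) $H$ has $11$ or more vertices, where an enumeration shows no $3$-circuit with $25$ edges exists at all.

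Second, and more importantly, you are missing the invariant that makes case~(ii) tractable. You correctly flag that comparing $M_{3,H}$ to $M_{3,K_{5,5}}$ under every edge bijection is the expensive bottleneck. The paper sidesteps this with the \emph{balanced stress} condition: by a result of Bolker and Roth, the unique equilibrium stress of a generic $K_{5,5}$ framework has coordinate sum zero, so the normal vector to $M_{3,K_{5,5}}$ at a generic smooth point is balanced. Balancedness of the normal is invariant under any coordinate permutation, so it can be tested once per candidate graph $G$ rather than once per edge bijection. The enumeration then just checks, for each $3$-rigid $3$-circuit $G\neq K_{5,5}$ on $10$ vertices, that its generic stress is \emph{not} balanced. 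Without this permutation-invariant witness your proposed enumeration is not clearly feasible, and nothing in your outline supplies a substitute.
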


Its proof will involve three cases, two of which will
require a computer enumeration. The first case will deal
simply with edge permutations of $K_{5,5}$. 
The second case will deal with all other graphs on $10$ vertices.
The third case will cover all other graphs with a greater number
of vertices.

\subsection{Symmetries}
A \emph{permutation automorphism} of a variety $W \subseteq \mathbb{C}^E$ is a permutation of $E$ that sends $W$ to itself.
When $E$ is the edge set of a graph $G$, a permutation automorphism is \emph{trivial} if it is induced by a graph automorphism of $G$.
When $W$ has codimension one, then a permutation automorphism is
a symmetry in its single defining polynomial.

\begin{proposition}
\label{prop:1}
    $M_{3,K_{5,5}}$ has only has trivial permutation automorphisms.
\end{proposition}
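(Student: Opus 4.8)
The plan is to analyze the combinatorial structure of $M_{3,K_{5,5}}$ as a subset of $\mathbb{C}^E$, where $E$ is the edge set of $K_{5,5}$, and to show that any permutation of $E$ carrying this variety to itself must respect the bipartite product structure of $K_{5,5}$. The starting observation is that $K_{5,5}$ is not a $3$-circuit but contains many $3$-circuits as subgraphs; more importantly, the coordinate subspaces $\mathbb{C}^F \subseteq \mathbb{C}^E$ on which $M_{3,K_{5,5}}$ has small codimension (equivalently, the subgraphs $H$ of $K_{5,5}$ for which the projection of $M_{3,K_{5,5}}$ to $\mathbb{C}^F$ drops dimension, or fails to be dominant) are invariants of the variety and hence are permuted among themselves by any permutation automorphism $\psi$. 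So the first step is to identify a collection of such "intrinsic" subgraph-families — natural candidates are the $3$-circuits contained in $K_{5,5}$, or the maximal rigid subgraphs, or the subgraphs whose measurement variety is a hypersurface in its coordinate span — and to compute which subsets $F \subseteq E$ realize them.

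Second, I would pin down exactly which subgraphs of $K_{5,5}$ are $3$-circuits. Since $K_{5,5}$ has $25$ edges and $10$ vertices, and a $3$-circuit on $k$ vertices has $3k-5$ edges, the relevant counts are small; the bipartite $3$-circuits that fit inside $K_{5,5}$ can be enumerated directly (for instance $K_{4,4}$ minus a perfect matching, $K_{5,5}$ minus suitable edges, etc., whichever have the right edge count and are generically independent-plus-one). Knowing that $\psi$ permutes the set of edge-sets of $3$-circuits, I then want to leverage the way these circuits overlap: two edges of $K_{5,5}$ are "parallel" (share no vertex but lie in a common $4$-cycle) or "adjacent" (share a vertex), and these relations should be recoverable from how pairs of edges sit inside the intrinsic subgraph families. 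In a complete bipartite graph, adjacency and the relation "$uv$ and $u'v'$ with $u,u'$ on the same side" have a rigid incidence geometry (essentially the structure of $K_5 \times K_5$ as a rook's graph / Latin-square structure), and an automorphism preserving it must come from $S_5 \times S_5$ together with the side swap — which is exactly the automorphism group of $K_{5,5}$.

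The cleanest route is therefore: (1) show some combinatorially defined relation $R$ on $E$ — I would try "$e,f$ share a vertex" — is determined by $M_{3,K_{5,5}}$, by exhibiting it in terms of dimensions of coordinate projections or of intrinsic subvarieties; (2) invoke the classical fact that $\mathrm{Aut}$ of the line graph (or rook's graph) of $K_{5,5}$ equals the induced action of $\mathrm{Aut}(K_{5,5}) = S_5 \wr S_2$, so any $\psi$ preserving $R$ is trivial; (3) check the finitely many small exceptional cases where line-graph reconstruction can fail (Whitney's theorem has a short exceptional list, and $K_{5,5}$ is comfortably outside it). The main obstacle I anticipate is step (1): proving that vertex-sharing of edges is genuinely an invariant of the measurement variety, rather than merely of the abstract graph. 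This requires a real rigidity-theoretic input — e.g., that the minimal subgraphs $H \subseteq K_{5,5}$ with $\dim M_{3,H} < |E(H)|$ (the $3$-circuits, or the "$M$-connected" rank-deficient subgraphs) have a prescribed vertex-edge incidence pattern that forces adjacency to be readable off — and possibly a computer enumeration of those subgraphs and their intersection pattern, consistent with the paper's remark that the proof involves a computer enumeration in some cases. If instead a slicker invariant is available (for instance, the degree sequence of the "dependency hypergraph" whose hyperedges are the supports of the defining equations, or the pattern of which coordinate hyperplane sections of $M_{3,K_{5,5}}$ remain irreducible), the argument would shortcut directly to the wreath-product conclusion.
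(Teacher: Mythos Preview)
Your proposal rests on a factual error that undermines the whole plan: $K_{5,5}$ \emph{is} a $3$-circuit. It has $10$ vertices and $25 = 3\cdot 10 - 5$ edges, is generically rigid in $\CC^3$, and hence is a (rigid) circuit of the $3$-dimensional rigidity matroid. Consequently every proper subgraph of $K_{5,5}$ is independent, so there are \emph{no} proper $3$-circuits inside $K_{5,5}$ to enumerate, and every coordinate projection of $M_{3,K_{5,5}}$ to $\CC^F$ with $F\subsetneq E$ is dominant. The families of ``intrinsic subgraphs'' you propose to harvest (sub-circuits, subgraphs where the projection drops dimension, etc.) are all empty, so step~(1) of your plan has nothing to work with.

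The paper exploits exactly the fact you got backwards. Because $K_{5,5}$ is a $3$-circuit, $M_{3,K_{5,5}}$ is a hypersurface cut out by a single irreducible polynomial $f$, and the paper identifies $f$ explicitly as the determinant of a $6\times 6$ bordered Cayley--Menger-type matrix in the variables $d_{ij}$. The monomial support of this determinant is completely transparent: every monomial is a product $d_{i_1 j_1}d_{i_2 j_2}d_{i_3 j_3}d_{i_4 j_4}$ with the $i_k$ distinct and the $j_k$ distinct, i.e.\ a partial matching of $K_{5,5}$. Thus two variables share a vertex iff they never occur together in a monomial of $f$, and any permutation automorphism of the hypersurface must preserve this relation. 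From there your step~(2) is essentially what the paper does: edge-adjacency forces stars to stars (since $K_{5,5}$ is triangle-free), and hence the permutation is induced by a vertex automorphism. So your endgame is fine; it is the mechanism for recovering edge-adjacency from the variety that needs to be replaced by the monomial-support argument on the single defining equation.
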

\begin{proof}
    Since $K_{5,5}$ is a circuit of the 3d rigidity matroid, $M_{3,K_{5,5}}$ is a hypersurface.
    We will show that $M_{3,K_{5,5}}$ is the vanishing locus of $f$
    where $f:=\det(M)$ and $M$ is the following
    \[
        M:=\begin{pmatrix}
            0 & 1 & 1 & 1 & 1 & 1 \\
            1 & d_{05} & d_{06} & d_{07} & d_{08} & d_{09} \\
            1 & d_{15} & d_{16} & d_{17} & d_{18} & d_{19} \\
            1 & d_{25} & d_{26} & d_{27} & d_{28} & d_{29} \\
            1 & d_{35} & d_{36} & d_{37} & d_{38} & d_{39} \\
            1 & d_{45} & d_{46} & d_{47} & d_{48} & d_{49}
        \end{pmatrix}.
    \]
    Recall that $M_{3,K_{10}}$ is the vanishing locus of the $6\times 6$ minors of the $11\times 11$ Cayley-Menger matrix for ten points~\cite{borcea2004number}.
    Then, $M$ is a submatrix of this Cayley-Menger matrix and therefore $f:=\det(M)$ must vanish on $M_{3,K_{10}}$. Since $K_{5,5}$ is a subgraph of $K_{10}$, $M_{3,K_{5,5}}$ is a coordinate projection of $M_{3,K_{10}}$. Since $f$ only involves coordinates corresponding to edges of $K_{5,5}$, $f$ must also vanish on $M_{3,K_{5,5}}$.
    A computation in Macaulay2~\cite{M2} shows that $f$ is irreducible.
    Thus we now have that the vanishing locus of $f$ is an irreducible variety of the same dimension as $M_{3,K_{5,5}}$ containing $M_{3,K_{5,5}}$.
    Therefore the two varieties are equal.

    Let $\phi$ be a permutation automorphism of $M_{3,K_{5,5}}$, i.e.~a permutation of $E(K_{5,5})$, the set of coordinates of $M_{3,K_{5,5}}$, satisfying $f = f\circ \phi$.
    We show that $\phi$ is trivial.
    To begin, we claim that if $e,f \in E(K_{5,5})$ share a vertex, then so do $\phi(e)$ and $\phi(f)$.
    Indeed, the monomials of $f$ are products $d_{i_1j_1}d_{i_2j_2}d_{i_3j_3}d_{i_4j_4}$ where $\{i_1,i_2,i_3,i_4\}$ and $\{j_1,j_2,j_3,j_4\}$ are respectively four-element subsets of $\{0,1,2,3,4\}$ and $\{5,6,7,8,9\}$.
    That $\phi$ is a permutation automorphism of $M_{3,K_{5,5}}$ is equivalent to the condition that $f = f\circ \phi$.
    Let $\phi(M)$ denote the matrix obtained from $M$ by applying $\phi$ entrywise. Then $\det(\phi(M)) = f$. If $\phi(d_{ij}) = d_{lm}$ and $\phi(d_{ik}) = d_{np}$, then $l=n$ or $m=p$. Otherwise, $\phi(f)$ would have a monomial divisible by $d_{lm}d_{np}$.
    Applying the inverse of $\phi$ then shows that $f$ has a monomial divisible by $d_{ij}d_{ik}$, which is false, so the claim is proven.
    
    Since $K_{5,5}$ has no cliques of size greater than two, the only way a set of edges can satisfy the property that each pair shares a vertex is for that set of edges to form a star. Thus $\phi$ sends stars to stars. Define $\tau$ be the vertex permutation of $K_{5,5}$ such that $\tau(v)$ is the non-leaf vertex of the star that is the image of the edges incident to $v$.
    Then $\phi(uv) = \tau(u)\tau(v)$.
\end{proof}

\subsection{Other graphs with 10 vertices}

\begin{definition}
A \emph{stress} of a $d$-dimensional framework $(G,\p)$ is an element of the left kernel of the Jacobian of $m_{d,G}$ at $\p$.
A covector $\omega \in (\CC^E)^*$  is \defn{balanced}
if the sum of its coordinates equals $0$.
\end{definition}

\begin{definition}
Let $m$ be a smooth map between smooth manifolds $X$ and $Y$.
A point $\p$ in the domain $X$ of $m$ is a \defn{regular point}
if $\text{rank}\; dm_\p= \max_{u\in X} \text{rank}\; df_u$.
\end{definition}
Note that if $G$ 
is a $d$-circuit, then $(G,\p)$ has a unique (up to scale) stress if and
only if $\p$ is a regular point of $m_{d,G}$. At non-regular points,
it will gain a larger space of stresses.

The following can be deduced from~\cite{BR}.
\begin{lemma}
\label{lem:br}
Let $\p$ be a regular point of 
$m_{3,K_{5,5}}$.
Then the framework $(K_{5,5},\p)$ has a unique
equilibrium stress vector $\omega$, up to scale, and $\omega$
is balanced. 
\end{lemma}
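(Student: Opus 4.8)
The plan is to invoke the Bolker--Roth analysis of stresses of complete bipartite frameworks, as developed in~\cite{BR}. Recall that for $K_{m,n}$ the rigidity (stress) matrix factors through a product structure: a stress $\omega$ on $K_{5,5}$ assigns a scalar $\omega_{ij}$ to the edge between the $i$-th point $\p_i$ of one part and the $j$-th point $\q_j$ of the other, and the equilibrium condition at the points says that for each $i$ we have $\sum_j \omega_{ij}(\p_i - \q_j) = 0$ and symmetrically for each $j$. Bolker and Roth show that the space of such stresses is naturally identified with the space of linear dependencies among the ``lifted'' vectors $(\p_i,1) \in \CC^4$ paired against $(\q_j,1) \in \CC^4$; concretely, a stress corresponds to a $4\times 4$ matrix $Q$ with $(\hat\p_i)^T Q\, \hat\q_j = \omega_{ij}$, and the equilibrium equations force $Q$ to lie in a specific subspace. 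The key counting point from~\cite[Theorem~1]{BR} is that for $K_{5,5}$ in $3$ dimensions at a generic (regular) configuration, this space of stress-matrices is exactly one-dimensional, which gives uniqueness of $\omega$ up to scale. Since $K_{5,5}$ is a $3$-circuit (hence $M_{3,K_{5,5}}$ is a hypersurface and the Jacobian of $m_{3,K_{5,5}}$ has a one-dimensional left kernel at a regular point), this one-dimensionality is forced and consistent.

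Next I would establish that the stress is balanced, i.e.\ $\sum_{ij}\omega_{ij} = 0$. The cleanest route is to use the product/bilinear description above: writing $\hat\p_i = (\p_i, 1)$, $\hat\q_j = (\q_j,1)$, and $\omega_{ij} = \hat\p_i^T Q \hat\q_j$ for the stress matrix $Q$, the total sum is $\sum_{ij}\omega_{ij} = \big(\sum_i \hat\p_i\big)^T Q \big(\sum_j \hat\q_j\big)$. I would then show that the equilibrium conditions constrain $Q$ so that this quantity vanishes --- equivalently, that $Q$ annihilates the all-ones direction in the relevant sense. Alternatively, and perhaps more robustly, one can argue directly from the structure of the Cayley--Menger determinant $f$ from Proposition~\ref{prop:1}: a stress at a regular point of the hypersurface $M_{3,K_{5,5}}$ is the gradient $\nabla f$ pulled back appropriately, and the fact that $f = \det(M)$ with $M$ having a row and column of the special form in Proposition~\ref{prop:1} (a bordered matrix) forces the gradient, evaluated on the measurement vector, to satisfy a homogeneity-type identity; combined with Euler's relation on the degree-$4$ homogeneous polynomial $f$ this yields the balancing condition. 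I would pick whichever of these is shortest to write cleanly.

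The main obstacle I anticipate is purely bookkeeping: matching our sign and normalization conventions for $m_{d,G}$ (no conjugation, squared lengths) with those in~\cite{BR}, which were stated over $\RR$ for Euclidean frameworks, and checking that the relevant genericity hypothesis there (``regular configuration'') is implied by $\p$ being a regular point of $m_{3,K_{5,5}}$ in our sense. Since the stress/rigidity matrix is the Jacobian of $m_{3,K_{5,5}}$ and all the Bolker--Roth arguments are algebraic identities in the coordinates of $\p$, they transfer verbatim to $\CC^3$; the only real content to verify is that the generic complex configuration is not one of the degenerate ones excluded in~\cite{BR} (e.g.\ points lying on a quadric), which follows because those degeneracies are proper Zariski-closed conditions. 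Once the conventions are aligned, both the uniqueness-up-to-scale and the balancedness drop out of the cited theorem and a one-line computation.
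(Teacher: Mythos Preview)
Your overall plan---cite Bolker--Roth for the uniqueness and then deduce balancedness from the structure of the stress---is exactly what the paper does, and your remarks about transferring the real Bolker--Roth statements to $\CC$ and matching ``regular point of $m_{3,K_{5,5}}$'' with their non-degeneracy hypotheses (affine general position, points not on a common quadric) are on target. Where you diverge is in the balancedness step, and there the proposal has a real gap.

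The $Q$-matrix description is not correct. The generic stress on $K_{5,5}$ in three dimensions is $\omega_{ij}=\lambda_i\mu_j$, where $\lambda$ and $\mu$ are the unique (up to scale) affine dependencies among $\hat\p_1,\dots,\hat\p_5$ and $\hat\q_1,\dots,\hat\q_5$ respectively; this is what the Bolker--Roth decomposition yields in this case. Such an $\omega$ \emph{cannot} be written as $\hat\p_i^{\,T}Q\,\hat\q_j$ for any $4\times 4$ matrix $Q$: for fixed $j$ the values $\hat\p_i^{\,T}(Q\hat\q_j)$ of a linear functional on $\CC^4$ must themselves satisfy $\sum_i\lambda_i\,\hat\p_i^{\,T}(Q\hat\q_j)=0$, whereas $\sum_i\lambda_i\omega_{ij}=\mu_j\sum_i\lambda_i^2$ is generically nonzero. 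So the step ``I would then show that the equilibrium conditions constrain $Q$ so that this quantity vanishes'' has no footing. Your Cayley--Menger alternative also misfires as written: Euler's identity on the degree-$4$ polynomial $f$ gives $\sum_{ij} d_{ij}\,\omega_{ij}=0$ on the hypersurface, not $\sum_{ij}\omega_{ij}=0$. (A cofactor-expansion argument exploiting the bordering row and column of $1$'s in $M$ \emph{does} give the right identity, but that is not Euler's relation.)

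The paper's argument is the one-liner you are circling around but not landing on: Bolker--Roth gives directly that the stress coefficients on the edges through each fixed vertex sum to zero---equivalently $\sum_j\omega_{ij}=\lambda_i\sum_j\mu_j=0$, since $\mu$ is an affine dependence. Because $K_{5,5}$ is bipartite, summing these five identities over the vertices on one side counts every edge exactly once, so $\sum_{i,j}\omega_{ij}=0$. That is the whole balancedness proof.
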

\begin{proof}
From~\cite[Theorem 1]{BR} a 
framework of $K_{5,5}$ in three dimensions 
has exactly one stress (up to scale) that is balanced
unless one of the sets of $5$ points is
not in general affine position, in which case
it picks up a larger space of balanced stresses.
From~\cite[Theorem 6]{BR} any stress  of a 
framework of $K_{5,5}$ in three dimensions 
must be balanced unless the $10$ points are on a quadric,
in which case it picks up an unbalanced stress.

Thus regular points have exactly one stress and it is 
balanced.
\end{proof}

The following gives a useful geometric interpretation of 
equilibrium stress vectors.
\begin{lemma}\cite[Lemma 2.21]{ght}
\label{lem:snorm}
Let $\p$ be a regular point of the mapping
$m_{d,G}$, and let $m_{d,G}(\p)$ be a smooth point
of $M_{d,G}$.  Then the set of equilibrium stresses
for $(G,\p)$ equals the set of conormal vectors
to $M_{d,G}$ at $m_{d,G}(\p)$.  (A conormal vector is a covector
that annihilates every tangent vector.)
\end{lemma}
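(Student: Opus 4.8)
The plan is to exhibit three linear subspaces of $\CC^E$ and show they all coincide: the image of the differential $dm_{d,G}|_\p$, viewed inside $\CC^E \cong T_{m_{d,G}(\p)}\CC^E$; the Zariski tangent space $T_{m_{d,G}(\p)} M_{d,G}$; and the orthogonal complement of the space of equilibrium stresses of $(G,\p)$, where orthogonality is taken with respect to the non-degenerate symmetric bilinear form $\langle x,y\rangle = x^Ty$ on $\CC^E$ (the conjugation-free convention used throughout the paper). Write $J$ for the Jacobian of $m_{d,G}$ at $\p$. By definition a stress is an element of the left kernel of $J$, so the stress space is $\{\omega : \omega^T J = 0\}$ and its orthogonal complement is precisely the column span of $J$, i.e.\ $\operatorname{im}(dm_{d,G}|_\p)$. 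Likewise a normal vector to $M_{d,G}$ at the smooth point $m_{d,G}(\p)$ is by definition an element of the orthogonal complement of $T_{m_{d,G}(\p)} M_{d,G}$. Hence the lemma reduces to the single identity $\operatorname{im}(dm_{d,G}|_\p) = T_{m_{d,G}(\p)} M_{d,G}$.

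To prove that identity I would first note the inclusion $\operatorname{im}(dm_{d,G}|_\p) \subseteq T_{m_{d,G}(\p)} M_{d,G}$, which holds for any morphism and the Zariski closure of its image: if $g$ is a polynomial vanishing on $M_{d,G}$ then $g\circ m_{d,G} \equiv 0$ on $\CC^{nd}$, so $dg_{m_{d,G}(\p)}\circ dm_{d,G}|_\p = 0$; intersecting the kernels of the differentials $dg_{m_{d,G}(\p)}$ over all such $g$ yields $T_{m_{d,G}(\p)} M_{d,G}$ and gives the containment. Then I would compare dimensions. Since $M_{d,G}$ is the Zariski closure of the image of $m_{d,G}$ over $\CC$, its dimension equals the generic rank of $J$ (as already recorded just after the definition of the measurement variety). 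Regularity of $\p$ means $J$ attains this generic rank, so $\dim\operatorname{im}(dm_{d,G}|_\p) = \dim M_{d,G}$; smoothness of $m_{d,G}(\p)$ on $M_{d,G}$ means $\dim T_{m_{d,G}(\p)} M_{d,G} = \dim M_{d,G}$ as well. A subspace contained in a subspace of the same finite dimension equals it, so the two tangent descriptions agree and the lemma follows.

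I expect the proof to be essentially formal once both dimension counts are available, and the only delicate point is the complementary roles of the two hypotheses: regularity of the configuration $\p$ is what pushes $\operatorname{rank} J$ \emph{up} to $\dim M_{d,G}$, while smoothness of the image point $m_{d,G}(\p)$ is what pins the Zariski tangent space \emph{down} to $\dim M_{d,G}$. Without the first, the image of the differential could be a proper subspace of the tangent space; without the second, the tangent space could strictly contain the image of the differential, and in either case the set of normal vectors would be strictly larger than the set of stresses. It is also worth stating explicitly that all orthogonal complements here are formed with the symmetric form $x^Ty$ rather than a Hermitian form, so that $(\ker J^T)^\perp = \operatorname{im} J$ holds with no complex-conjugation caveats.
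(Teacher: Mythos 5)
The paper gives no proof of this lemma; it is quoted verbatim from \cite[Lemma 2.21]{ght}, so there is no in-paper argument to compare against. Your proof is correct and is the standard one: reducing both sides to orthogonal complements (under the symmetric form $x^Ty$) of $\operatorname{im}(dm_{d,G}|_\p)$ and $T_{m_{d,G}(\p)}M_{d,G}$ respectively, establishing the containment $\operatorname{im}(dm_{d,G}|_\p)\subseteq T_{m_{d,G}(\p)}M_{d,G}$, and then matching dimensions using exactly the two hypotheses (regularity of $\p$ pushes the rank of the Jacobian up to $\dim M_{d,G}$; smoothness of the image point pins the Zariski tangent space down to $\dim M_{d,G}$). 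Your closing remark correctly isolates the only delicate conventions, namely that the complement is taken with respect to the bilinear rather than Hermitian form and that both hypotheses are genuinely needed.
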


Let $G$ be a $3$-circuit.
Let $S(G)$ be the non-empty Zariski open subset of $M_{3,G}$ consisting of its smooth points.
Let $B(G)$ be the (possibly empty)
subset of $S(G)$ where the conormal vectors are balanced.

\begin{lemma}
\label{lem:sclosed}
Let $G$ be a $3$-circuit. Then $B(G)$ is a Zariski closed
subset of $S(G)$.
\end{lemma}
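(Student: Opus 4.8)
The plan is to exhibit $B(G)$ as the vanishing locus, inside $S(G)$, of a regular function (or a finite collection of them). Since $G$ is a $3$-circuit, $M_{3,G}$ is an irreducible hypersurface in $\CC^E$, hence it is cut out by a single irreducible polynomial $g$. At a smooth point $x \in S(G)$, the normal line to $M_{3,G}$ is spanned by the gradient $\nabla g(x) = \left(\partial g/\partial x_e (x)\right)_{e\in E}$, which is a nonzero vector by smoothness. By Lemma~\ref{lem:snorm}, this gradient is (a scalar multiple of) the unique-up-to-scale equilibrium stress of the corresponding framework, so $x \in B(G)$ precisely when $\nabla g(x)$ is balanced, i.e.~when $\sum_{e\in E} \partial g/\partial x_e (x) = 0$.

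The key steps, in order: first, invoke the circuit hypothesis to get the defining irreducible polynomial $g$ of the hypersurface $M_{3,G}$, and recall that $S(G) = \{x \in M_{3,G} : \nabla g(x) \neq 0\}$ is the smooth locus. Second, define the regular function $h := \sum_{e \in E} \partial g/\partial x_e$ on $\CC^E$, and restrict it to $S(G)$; it is visibly regular there. Third, observe that $B(G) = \{x \in S(G) : h(x) = 0\}$: this uses Lemma~\ref{lem:snorm} to identify the one-dimensional normal space at a smooth point with the span of $\nabla g(x)$, together with the elementary fact that ``balancedness'' is a linear condition, so a nonzero vector is balanced iff any (hence every) nonzero scalar multiple of it is. Thus $B(G) = V(h) \cap S(G)$ is Zariski closed in $S(G)$.

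I do not expect a serious obstacle here; the lemma is essentially a bookkeeping consequence of the hypersurface structure and Lemma~\ref{lem:snorm}. The one point requiring a little care is the passage from ``the normal space is spanned by $\nabla g$'' to ``$\nabla g$ itself is an equilibrium stress'': Lemma~\ref{lem:snorm} identifies the \emph{set} of equilibrium stresses with the \emph{set} of normal vectors, so one must note that this is a statement about the full line (both are the same one-dimensional subspace at a smooth point of a hypersurface), and that the balancedness of a line depends only on the line, not on a chosen representative. With that observation in place the argument is complete.
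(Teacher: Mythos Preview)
Your argument is correct and is, at bottom, the same idea as the paper's: the paper's two-line proof says that the Gauss map $S(G)\to\mathbb{P}^{|E|-1}$ is regular and that $B(G)$ is the preimage of the balanced hyperplane, hence closed; you make this concrete by choosing the irreducible defining polynomial $g$ of the hypersurface $M_{3,G}$, so that the Gauss map is $x\mapsto[\nabla g(x)]$ and the preimage of the balanced hyperplane is exactly $\{h=0\}\cap S(G)$ with $h=\sum_e \partial g/\partial x_e$.

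One small remark: your appeal to Lemma~\ref{lem:snorm} is unnecessary here. The set $B(G)$ is defined directly as the locus in $S(G)$ where the \emph{normal} line is balanced, with no reference to equilibrium stresses or to a preimage configuration $\p$; so once you know $\nabla g(x)$ spans the normal line at a smooth point, you are done. Lemma~\ref{lem:snorm} only enters later, in Lemmas~\ref{lem:Beq} and~\ref{lem:Bless}, to connect $B(G)$ back to stress data on frameworks.
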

\begin{proof}
    Let the ``Gauss map'' be the map from $M_{3,K_{5,5}}$
    to $\PP^{24}$ that maps a smooth point $\x$ of
    $M_{3,K_{5,5}}$ to the conormal 
    of $M_{3,K_{5,5}}$ at $\x$. Its $25$ homogeneous coordinates are
    computed by evaluating, at $\x$, the $25$ partial derivatives
    of the single defining equation of $M_{3,K_{5,5}}$.
    Thus this map can be defined using polynomial functions.
    Meanwhile, the balanced-conormal subset is the pull back of
    a Zariski closed condition on $\PP^{24}$.
\end{proof}

\begin{lemma}
\label{lem:Beq}
$B(K_{5,5}) = S(K_{5,5})$.
\end{lemma}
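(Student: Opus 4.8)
The plan is to combine the three preceding lemmas with a routine density argument. First I would record that $S(K_{5,5})$ is irreducible: it is a nonempty Zariski open subset of $M_{3,K_{5,5}}$, which is irreducible. By Lemma~\ref{lem:sclosed}, $B(K_{5,5})$ is Zariski closed in $S(K_{5,5})$, so to get equality it suffices to show that $B(K_{5,5})$ contains a Zariski dense subset of $S(K_{5,5})$.

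To produce such a subset, I would use that $m_{3,K_{5,5}}$ is dominant onto $M_{3,K_{5,5}}$ by construction, so its image is a constructible set that is Zariski dense in $M_{3,K_{5,5}}$. Intersecting this with the dense open locus of regular points of $m_{3,K_{5,5}}$ in the domain $\CC^{15}$ and with (the preimage of) the dense open smooth locus of $M_{3,K_{5,5}}$ in the target, one sees that a generic $\p \in \CC^{15}$ is simultaneously a regular point of $m_{3,K_{5,5}}$ and has image $m_{3,K_{5,5}}(\p)$ a smooth point of $M_{3,K_{5,5}}$. The set of all such images is Zariski dense in $M_{3,K_{5,5}}$, hence dense in $S(K_{5,5})$.

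Now fix any such $\p$. By Lemma~\ref{lem:br}, the framework $(K_{5,5},\p)$ has a unique equilibrium stress vector $\omega$ up to scale, and $\omega$ is balanced. By Lemma~\ref{lem:snorm}, since $\p$ is a regular point of $m_{3,K_{5,5}}$ and $m_{3,K_{5,5}}(\p)$ is a smooth point of $M_{3,K_{5,5}}$, the set of equilibrium stresses of $(K_{5,5},\p)$ equals the set of normal vectors to $M_{3,K_{5,5}}$ at $m_{3,K_{5,5}}(\p)$. Hence every normal vector at $m_{3,K_{5,5}}(\p)$ is a scalar multiple of $\omega$ and so is balanced, i.e.\ $m_{3,K_{5,5}}(\p) \in B(K_{5,5})$. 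Therefore $B(K_{5,5})$ contains a Zariski dense subset of the irreducible set $S(K_{5,5})$, and being Zariski closed in $S(K_{5,5})$ it must coincide with $S(K_{5,5})$.

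I expect the only slightly delicate point to be the bookkeeping in the second paragraph: confirming that the three genericity conditions — regularity of $m_{3,K_{5,5}}$ at $\p$, smoothness of $M_{3,K_{5,5}}$ at the image, and the image actually being attained — can be satisfied simultaneously on a Zariski dense set. This is standard once one recalls that each condition individually holds on a dense subset and that $M_{3,K_{5,5}}$ is by definition the Zariski closure of the image of $m_{3,K_{5,5}}$; everything else is a direct application of Lemmas~\ref{lem:br}, \ref{lem:snorm}, and~\ref{lem:sclosed}.
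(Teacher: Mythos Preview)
Your proof is correct and follows essentially the same route as the paper's: exhibit a Zariski dense subset of $S(K_{5,5})$ on which the normals are balanced (via Lemmas~\ref{lem:br} and~\ref{lem:snorm}), then invoke Lemma~\ref{lem:sclosed} and irreducibility to conclude. The only slip is that the configuration space is $\CC^{30}$, not $\CC^{15}$, since $K_{5,5}$ has ten vertices and $d=3$.
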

\begin{proof}
    Let $\p$ be a a regular point of the  map
    $m_{3,K_{5,5}}$.
    From Lemma~\ref{lem:br} the  equilibrium stress of $(K_{5,5},\p)$ is balanced
    and the same is true in a neighborhood of $\p$.
    The image of this neighborhood under $m_{3,K_{5,5}}$ is 
    full dimensional in $M_{3,K_{5,5}}$. Removing the non-smooth points
    from this full dimensional image, we are left with a 
    full dimensional subset of $M_{3,K_{5,5}}$ where,
    from Lemma~\ref{lem:snorm}, the
    conormals of $M$ are balanced.
    Thus $\dim(B(K_{5,5}))=\dim(S(K_{5,5}))$. From Lemma~\ref{lem:sclosed}, $B(K_{5,5})$ is a closed subset of $S(K_{5,5})$.
    Since $M_{3,K_{5,5}}$ is irreducible this implies $B(K_{5,5})=S(K_{5,5})$.
\end{proof}

\begin{lemma}
\label{lem:Bless}
Let $G$ be a $3$-circuit. 
Let $(G,\p)$ be a framework in $\CC^3$ with unique (up to scale)
nonzero equilibrium stress $\omega$.
Suppose that $\omega$ is not balanced.
Then $\dim(B(G))<\dim(S(G))$. 
\end{lemma}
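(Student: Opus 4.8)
The plan is to exploit the irreducibility of $M_{3,G}$ together with Lemma~\ref{lem:sclosed} to reduce the claim $\dim(B(G)) < \dim(S(G))$ to the statement that $B(G) \neq S(G)$. Indeed, since $G$ is a $3$-circuit, $M_{3,G}$ is an irreducible hypersurface, so its smooth locus $S(G)$ is irreducible (and dense) of dimension $\dim(M_{3,G})$. By Lemma~\ref{lem:sclosed}, $B(G)$ is a Zariski-closed subset of $S(G)$; a proper Zariski-closed subset of an irreducible variety has strictly smaller dimension. Hence it suffices to produce a single smooth point of $M_{3,G}$ at which the normal vector is not balanced.

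To produce such a point, I would invoke Lemma~\ref{lem:snorm}: at a regular point $\p$ of $m_{3,G}$ whose image is a smooth point of $M_{3,G}$, the normal vectors to $M_{3,G}$ are exactly the equilibrium stresses of $(G,\p)$. Since $G$ is a $3$-circuit, the generic equilibrium stress space is one-dimensional, so for generic $\p$ the normal direction at $m_{3,G}(\p)$ is the line spanned by the (essentially unique) stress $\omega$ of $(G,\p)$. The hypothesis of the lemma gives us one particular framework $(G,\p_0)$ whose unique-up-to-scale stress $\omega_0$ is not balanced. I would then argue that "not balanced" is a Zariski-open (hence generic) condition on $\p$: the sum of the stress coordinates is a rational function of $\p$ on the locus where the stress space is one-dimensional, and it is nonzero at $\p_0$, so it is nonzero for all $\p$ in a dense open subset. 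Consequently there is a regular point $\p$ of $m_{3,G}$, with $m_{3,G}(\p)$ smooth on $M_{3,G}$, whose stress — equivalently, by Lemma~\ref{lem:snorm}, whose normal vector — is not balanced. This exhibits a point of $S(G) \setminus B(G)$, so $B(G) \subsetneq S(G)$, and the dimension drop follows.

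The one point requiring a little care is the passage from "there exists $\p_0$ with unbalanced stress" to "the generic $\p$ has unbalanced stress." For this I would note that on the dense open set $U$ of configurations where $(G,\p)$ has a $1$-dimensional stress space and $m_{3,G}(\p)$ is a smooth point, one can choose the stress $\omega(\p)$ to depend rationally (indeed algebraically) on $\p$ — e.g.\ as a suitably normalized maximal minor of the transposed Jacobian — and then the balance functional $\p \mapsto \sum_e \omega(\p)_e$ is a rational function on $U$ that does not vanish identically because it is nonzero at $\p_0$ (after possibly shrinking $U$ to avoid the poles and zeros of the normalization, which is harmless since we only need one good point). This is the main obstacle, but it is a routine genericity argument; everything else is immediate from the irreducibility of $M_{3,G}$ and the two cited lemmas.
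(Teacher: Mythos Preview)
Your proposal is correct and follows essentially the same route as the paper: reduce to $B(G)\subsetneq S(G)$ via irreducibility and Lemma~\ref{lem:sclosed}, then use Lemma~\ref{lem:snorm} together with an openness/genericity argument to pass from the given $\p_0$ with unbalanced stress to a nearby regular $\p$ whose image is smooth and whose normal is still unbalanced. The only cosmetic difference is that the paper phrases the perturbation as a neighborhood/continuity argument while you spell it out as a rational-function (Zariski-open) argument; both are standard and interchangeable here.
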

\begin{proof}
The set of regular points is open. 
Also the equilibrium
stresses change continuously as the configuration 
changes~\cite[Lemma B.4]{ghave}.
Thus, there is a neighborhood of $\p$ of regular points of $m_{3,G}$ where  
the equilibrium stresses are not balanced.
We can apply the same reasoning as the proof of Lemma~\ref{lem:Beq} to 
show that we have a full dimensional subset of $M_{3,G}$ where, the conormal vectors of $M$ are \emph{not} balanced.
From Lemma~\ref{lem:sclosed}, $B(G)$ is a Zariski closed subset of $S(G)$.
We have argued that $B(G)$ misses a full-dimensional subset of $S(G)$, and so,
from irreducibility of $M_{3,G}$, $B(G)$ must be of lower dimension.
\end{proof}

\begin{lemma}
\label{lem:nomatch}
Let $G$ be a $3$-circuit. 
Let $(G,\p)$ be a framework in $\CC^3$ with a unique (up to scale)
nonzero equilibrium stress $\omega$.
Suppose that $\omega$ is not balanced.
Then there is no edge bijection $\psi:E(K_{5,5}) \rightarrow E(G)$
under which $M_{3,K_{5,5}}=\psi'(M_{3,G})$.
\end{lemma}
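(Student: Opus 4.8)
The plan is to argue by contradiction, playing Lemma~\ref{lem:Beq} off against Lemma~\ref{lem:Bless}. So suppose there were an edge bijection $\psi : E(K_{5,5}) \to E(G)$ with $M_{3,K_{5,5}} = M_{3,G}$ under $\psi$; concretely, the coordinate permutation of $\CC^E$ induced by $\psi$ carries $M_{3,G}$ isomorphically onto $M_{3,K_{5,5}}$. First I would record that both varieties are then hypersurfaces (both graphs are $3$-circuits), so $S(K_{5,5})$, $S(G)$, $B(K_{5,5})$, $B(G)$ are all defined.

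The next step is to observe that every ingredient in the definitions of $S(\cdot)$ and $B(\cdot)$ is transported correctly by $\psi$. A coordinate permutation is a linear automorphism of $\CC^E$, so it sends the smooth locus of $M_{3,G}$ onto the smooth locus of $M_{3,K_{5,5}}$, giving $\psi(S(G)) = S(K_{5,5})$, and it carries normal spaces to normal spaces. Moreover a coordinate permutation fixes the linear functional "sum of the coordinates", so a normal vector is balanced before permuting exactly when its image is balanced afterwards. Hence $\psi(B(G)) = B(K_{5,5})$, and in particular $\dim S(G) = \dim S(K_{5,5})$ and $\dim B(G) = \dim B(K_{5,5})$.

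Now I would invoke the two key lemmas. Lemma~\ref{lem:Beq} gives $B(K_{5,5}) = S(K_{5,5})$, hence $\dim B(K_{5,5}) = \dim S(K_{5,5})$. On the other hand, $G$ is a $3$-circuit carrying a framework $(G,\p)$ whose unique (up to scale) nonzero equilibrium stress $\omega$ is not balanced, so Lemma~\ref{lem:Bless} yields $\dim B(G) < \dim S(G)$. Chaining the equalities and inequality,
\[
    \dim S(G) = \dim S(K_{5,5}) = \dim B(K_{5,5}) = \dim B(G) < \dim S(G),
\]
which is absurd. Hence no such edge bijection can exist.

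I do not anticipate a real obstacle: all the substance sits in Lemmas~\ref{lem:Beq} and~\ref{lem:Bless}, and the only point requiring care is the bookkeeping in the second paragraph, namely that "$M_{3,K_{5,5}} = M_{3,G}$ under an edge bijection" genuinely permits identifying smooth loci and balanced-normal loci. It is also worth noting in passing that the hypotheses are self-consistent: a framework with a one-dimensional equilibrium stress space is automatically a regular point of $m_{3,G}$, which is precisely what makes $B(G)$ the relevant locus and lets Lemma~\ref{lem:Bless} apply.
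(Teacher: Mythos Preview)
Your argument is correct and is exactly the approach the paper takes: the paper's proof is the single line ``This follows from Lemmas~\ref{lem:Beq} and~\ref{lem:Bless},'' and you have simply spelled out the bookkeeping (that a coordinate permutation carries $S(G)$ to $S(K_{5,5})$ and $B(G)$ to $B(K_{5,5})$ because the balanced condition is permutation-invariant) that this one-liner leaves implicit.
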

\begin{proof}
This follows from Lemmas~\ref{lem:Beq} and \ref{lem:Bless}.
\end{proof}
Note that the balanced stress condition can be tested
without enumerating the possible edge bijections between
$K_{5,5}$ and $G$.

The following lemma is a basic and well-known observation about $3$-rigid $3$-circuits.
Recall that a \emph{cut} of a connected graph $G$ is a set $S$ of vertices such that the graph obtained from $G$ by removing $S$ is disconnected, and that a connected graph is \emph{$3$-connected} if it has no cuts of size $1$ or $2$.

\begin{lemma}\label{lem:basicRigidityFacts}
    Let $G$ be a $3$-rigid $3$-circuit. Then
    \begin{enumerate}
        \item $G$ is $3$-connected,
        \item $G$ contains no proper subgraph isomorphic to $K_5$, and
        \item $G$ has minimum degree at least $4$.
    \end{enumerate}
\end{lemma}
\begin{proof}
    Given circuits $C_1,C_2$ of any matroid, $C_1 \subseteq C_2$ can only happen if $C_1 = C_2$. Thus since $K_5$ is a $3$-circuit, any graph properly containing $K_5$ cannot be a $3$-circuit.
    
    If $\{u,v\}$ is a cut of $G$, then in any generic $3$-dimensional real framework $(G,{\bf p})$, the connected components of $G\setminus \{u,v\}$ can be independently rotated around the line through ${\bf p}(u)$ and ${\bf p}(v)$. Thus if $G$ is not $3$-connected, it is not $3$-rigid.
    
    If a vertex $u$ has degree $3$ or less,
    then the lengths of the edges incident to $u$ can be independently perturbed.
    Indeed, in any real three-dimensional framework on $G$ where $u$ has degree $3$,
    the length of any edge incident to $G$ can be perturbed by removing that edge, moving $u$ slightly along 
    the circle in the plane normal to the line through $u$'s remaining neighbors, centered at the point where this line and plane meet
    then putting the removed edge back.
    This implies that there is no polynomial that these three edge lengths must satisfy.
    Passing to the differential, we see that there is no linear relation involving the corresponding rows in the rigidity matrix.
\end{proof}

\begin{proposition}
\label{prop:2}
    If $G$ is a graph on $10$ vertices with 
    $M_{3,G}= M_{3,K_{5,5}}$,
    then $G = K_{5,5}$.
\end{proposition}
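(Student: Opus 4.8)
The plan is to reduce to a finite computer search over $10$-vertex graphs and then eliminate the non-$K_{5,5}$ candidates using the stress-balance obstruction developed above. First I would assemble the structural constraints that $G$ must satisfy. Since $M_{3,G} = M_{3,K_{5,5}}$ and the latter is a hypersurface, $G$ must be a $3$-circuit; since it has the same dimension as $M_{3,K_{5,5}}$, which is full in its ambient $\CC^{25}$ minus one, $G$ has $25$ edges on $10$ vertices. Moreover $K_{5,5}$ is $3$-rigid, so $G$ is a $3$-rigid $3$-circuit, hence by Lemma~\ref{lem:basicRigidityFacts} it is $3$-connected and contains no proper $K_5$. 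Counting degrees of freedom, a $3$-circuit on $10$ vertices has exactly $3\cdot 10 - 6 + 1 = 25$ edges, consistent with the above. These conditions—$10$ vertices, $25$ edges, $3$-connected, no proper $K_5$, and being a $3$-circuit (checkable via the generic rank of the rigidity matrix or via Dress–Lovász / the combinatorial characterizations available in low-vertex cases)—cut the search down to a manageable finite list.

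Next I would run the computer enumeration: generate all graphs on $10$ vertices with $25$ edges up to isomorphism (using \texttt{nauty} or an equivalent), discard those that are not $3$-connected, contain a $K_5$, or fail to be $3$-circuits, and collect the survivors. For each survivor $G \ne K_{5,5}$, pick a random (hence generic) rational framework $(G,\p)$ in $\CC^3$, compute the Jacobian of $m_{3,G}$ at $\p$, verify its corank is exactly $1$ (so the equilibrium stress $\omega$ is unique up to scale, as required), and then test whether $\omega$ is balanced, i.e. whether its coordinates sum to zero. If $\omega$ is not balanced for even one such generic $\p$—and genericity plus Zariski-closedness of the balanced locus (Lemma~\ref{lem:sclosed}) means this is the typical behavior unless balance holds identically—then Lemma~\ref{lem:nomatch} shows $M_{3,G} \ne M_{3,K_{5,5}}$ under any edge bijection, so such $G$ is excluded. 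If every survivor other than $K_{5,5}$ is eliminated this way, the proposition follows.

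The main obstacle I anticipate is the possibility that some non-$K_{5,5}$ survivor $G$ also has an identically balanced equilibrium stress, so that Lemma~\ref{lem:nomatch} does not apply. In that case one needs a separate argument: either a finer invariant of the hypersurface $M_{3,G}$ (for instance the Newton polytope or the monomial support of its defining polynomial, as was exploited in the proof of Proposition~\ref{prop:1}) distinguishing it from $f$, or a direct check that no edge bijection carries the defining polynomial of $M_{3,G}$ to $f$. A secondary concern is purely computational: certifying that a randomly chosen framework is genuinely generic for the rank computation, which can be handled by working over $\QQ$ and confirming the corank is $1$ exactly, and certifying irreducibility or equality of the relevant varieties, which can be delegated to \texttt{Macaulay2} as in Proposition~\ref{prop:1}. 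I expect, however, that the stress-balance test alone disposes of all the candidates, since the balanced-stress property is a strong and non-generic coincidence that $K_{5,5}$ enjoys for special reasons (Lemma~\ref{lem:br}).
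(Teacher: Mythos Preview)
Your proposal is correct and follows essentially the same route as the paper: reduce to a finite enumeration of $10$-vertex, $25$-edge $3$-rigid $3$-circuits and eliminate each non-$K_{5,5}$ candidate via the stress-balance obstruction of Lemma~\ref{lem:nomatch}. The paper's computation confirms your expectation that the anticipated obstacle does not arise---every surviving candidate other than $K_{5,5}$ has an unbalanced generic stress, so no fallback invariant is needed.
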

\begin{proof}
Since $M_{3,G}$ is a hypersurface in $\mathbb{C}^{25}$ and all its coordinate projections are full-dimensional, $G$ must be a $3$-circuit with $25$ edges.
From a dimension count, a $3$-circuit with $10$ vertices and $25$
edges must also be $3$-rigid. Via Lemma~\ref{lem:nomatch}, it is enough to show that for each $3$-rigid $3$-circuit $G$, either $G = K_{5,5}$ or the unique (up to scale) stress of $G$ is not balanced, in which case 
there is no edge bijection between $K_{5,5}$ and $G$
so that $M_{3,K_{5,5}}=M_{3,G}$.
We do this computationally in the following way.

We begin by using the \verb|geng| command of Nauty/Traces to generate the set $\mathcal{G}$ of all $2$-connected graphs with 25 edges and 10 vertices with minimum degree 4.
Lemma~\ref{lem:basicRigidityFacts} implies that all $3$-rigid $3$-circuits are contained in $\mathcal{G}$.
We randomly generate a configuration of $10$ points in $\RR^{3}$ and compute the rigidity matrix $M$ of $K_{10}$ for this configuration.
Then, for each $G \in \mathcal{G}$, we construct the matrix $M_G$ by restricting $M$ to the rows corresponding to edges of $G$. Let $\mathcal{G}'\subseteq \mathcal{G}$ consist of the graphs for which $M_{G}$ has rank $24$.
We then use Lemma~\ref{lem:basicRigidityFacts} to certify that each $G\in \mathcal{G}\setminus \mathcal{G}'$ is not a $3$-rigid $3$-circuit by showing that each such graph either contains a $K_5$ subgraph or fails to be $3$-connected.
Thus $\mathcal{G}'$ contains all $3$-rigid $3$-circuits with $10$ vertices and $25$ edges.

Every graph in $\mathcal{G}'$ has generic frameworks with exactly one nonzero stress (up to scale).
We then certify that if $G \in \mathcal{G}'\setminus\{K_{5,5}\}$, then stresses of $G$'s generic frameworks are not balanced by summing the entries of a stress of a framework on $G$ for each $G \in \mathcal{G}'$ and certifying that the only graph for which this quantity is $0$ is $K_{5,5}$.
A Mathematica script running these computations can be found at \url{https://dibernstein.github.io/Supplementary_materials/K55.html}.
\end{proof}

\subsection{Graphs with more vertices}
In order to show that $K_{5,5}$ is fully reconstructible
in $\CC^3$, it remains to show that no other graph with 25 edges has an isomorphic measurement variety. Any other such graph must be a circuit in the 3d rigidity matroid since its measurement variety is a hypersurface whose coordinate projections are all full-dimensional.

\begin{proposition}
\label{prop:3}
    Let $G$ be a $3$-circuit with $25$ edges. Then $G$ has $10$ vertices.
\end{proposition}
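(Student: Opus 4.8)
The plan is to pin $n:=|V(G)|$ between $10$ and $12$ with two short counting arguments, and then eliminate $n=11$ and $n=12$ by a finite computer search, exactly in the spirit of Proposition~\ref{prop:2}.

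First I would record that every vertex of a $3$-circuit has degree at least $4$. A circuit carries a stress $\omega$ in the left kernel of the Jacobian of $m_{3,G}$ at a generic point $\p$, and minimality of the circuit forces $\omega_e\neq 0$ for every edge $e$ (otherwise $\omega$ would witness dependence of the proper subgraph $G-e$). The vertex-$v$ block of the equilibrium equations reads $\sum_{u\sim v}\omega_{uv}\big(\p(v)-\p(u)\big)=0$, and at a generic configuration the vectors $\p(v)-\p(u)$ are linearly independent as soon as $\deg(v)\le 3$, forcing $\omega$ to vanish on the edges at $v$ --- a contradiction. Summing degrees gives $4n\le 2\cdot 25=50$, hence $n\le 12$. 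For the lower bound, a circuit with $25$ edges has rank $24$ in the $3$-dimensional rigidity matroid, while the rank of any edge set on $n$ vertices is at most $3n-6$; thus $3n-6\ge 24$, i.e.\ $n\ge 10$. So $n\in\{10,11,12\}$, and it suffices to rule out $n=11$ and $n=12$.

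For that step I do not expect a purely combinatorial obstruction, since generic $3$-dimensional rigidity has no Laman-type characterization; instead I would enumerate. Any such $G$ is $2$-connected (if $v$ were a cut vertex, then pinning $v$ leaves the rigidity-matrix rows of the two sides independent of each other, so $G$ would be independent, not a circuit), has minimum degree at least $4$ by the above, and contains no $K_5$ (a circuit cannot properly contain the circuit $K_5$). I would use \verb|geng| to list all such graphs on $11$ and on $12$ vertices --- for $n=12$ the degree sequence is forced to be $(6,4^{11})$ or $(5,5,4^{10})$, so that list is very short --- and then, as in the proof of Proposition~\ref{prop:2}, certify for each candidate via the rank of the submatrix $M_G$ of a generic rigidity matrix of $K_n$, together with the ranks of $M_{G-e}$ over all edges $e$, that $G$ is not a $3$-circuit ($G$ is a $3$-circuit exactly when $\mathrm{rank}(M_G)=24$ and $\mathrm{rank}(M_{G-e})=24$ for every $e$). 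Once no candidate on $11$ or $12$ vertices survives, $n=10$ follows. The main obstacle is the $n=11$ case: unlike $n=12$, whose near-$4$-regularity keeps the candidate list tiny, the $11$-vertex list is comparatively large, so the argument rests on the computation rather than on a clean inequality, and one must use enough necessary conditions ($2$-connectivity, minimum degree $4$, no $K_5$) to keep the enumeration tractable.
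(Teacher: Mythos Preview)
Your outline matches the paper's proof: the same two counting bounds pin $n\in\{10,11,12\}$, and the cases $n=11,12$ are then eliminated by a \texttt{geng} enumeration of $2$-connected minimum-degree-$4$ graphs with $25$ edges followed by a circuit test.

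The only notable difference is in how the surviving candidates are certified not to be $3$-circuits. Your plan relies on rank computations of $M_G$ and $M_{G-e}$ at a random configuration, but while $\mathrm{rank}(M_G)=25$ at a point does certify independence, observing $\mathrm{rank}(M_{G-e})<24$ at a single point does not certify that $G-e$ is dependent (the point might simply fail to be generic for that subgraph), so the dependent-but-not-a-circuit case is not rigorously handled by your test. The paper sidesteps this by supplying combinatorial witnesses for the $1246$ graphs not eliminated by the independence check: each contains a $K_5$ or an overdetermined subgraph on six vertices, i.e.\ a proper dependent subgraph, which certifies non-circuit-hood without reference to any configuration. This is easy to fold into your plan. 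One smaller point: your $2$-connectivity argument concludes that ``$G$ would be independent,'' but the block structure at a cut vertex only gives $\mathrm{rank}(G)=\mathrm{rank}(G_1)+\mathrm{rank}(G_2)$, which forces one of $G_1,G_2$ to be dependent and hence $G$ to properly contain a circuit---same conclusion, slightly different reason.
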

\begin{proof}
For a graph with $n\ge 4$ vertices, 
the rank of its $3$-dimensional rigidity matrix is at most
$3n-6$.  Thus a $3$-circuit, which must have exactly one stress,
    must have at least 10 vertices. Moreover, since all circuits of the 3d rigidity matroid have minimum degree at least 4, no $3$-circuit with 25 edges can have 13 or more vertices. 
    
    We check computationally that there are no circuits of the 3d rigidity matroid with 25 edges on 11 or 12 vertices. In particular, we  use the \verb geng  command of Nauty-Traces to generate all two-connected graphs on 11 and 12 vertices with 25 edges and minimum degree 4, then test which among them are circuits of the 3d rigidity matroid.
    
    As it turns out, none of them are, and we  certify this in the following way. We begin by eliminating all the graphs that are generically stress-free by certifying that they have a stress-free configuration.
    This leaves us with 1246 graphs. At this point, it becomes computationally feasible to eliminate all graphs with a $K_5$ subgraph. The remaining 113 graphs all fail to be circuits since they each contain a subgraph on six vertices with $3\cdot 6 - 5 = 18$ or more edges.
    A Mathematica script verifying this computation can be found at \url{https://dibernstein.github.io/Supplementary_materials/K55.html}.
\end{proof}

\begin{proof}[Proof of Theorem~\ref{thm:main}]
    If $G$ is a graph such that $M_{3,G} = M_{3,K_{5,5}}$, then Lemma~\ref{lem:dCircuit} implies that $G$ is a $3$-circuit. Proposition~\ref{prop:3} implies that $G$ has $10$ vertices and therefore Proposition~\ref{prop:2} implies $G = K_{5,5}$. Proposition~\ref{prop:1} then rules out the possibility of a nontrivial permutation automorphism.
\end{proof}

\section*{Acknowledgements}
This paper arose as part of the Fields Institute Thematic Program on Geometric constraint
systems, framework rigidity, and distance geometry. DIB was partially supported by US NSF grant DMS-1802902. SJG was partially supported by US NSF grant DMS-1564473.

\bibliographystyle{plain}
\footnotesize
\bibliography{reconstructability}

\begin{thebibliography}{1}

\bibitem{BR}
E.~D. Bolker and B.~Roth.
\newblock When is a bipartite graph a rigid framework?
\newblock {\em Pacific J. Math.}, 90(1):27--44, 1980.

\bibitem{borcea2004number}
Ciprian Borcea and Ileana Streinu.
\newblock The number of embeddings of minimally rigid graphs.
\newblock {\em Discrete \& Computational Geometry}, 31(2):287--303, 2004.

\bibitem{ghave}
Robert Connelly, Steven~J Gortler, and Louis Theran.
\newblock Generically globally rigid graphs have generic universally rigid
  frameworks.
\newblock {\em Combinatorica}, 40(1):1--37, 2020.

\bibitem{ggj}
D{\'a}niel Garamv{\"o}lgyi, Steven~J. Gortler, and Tibor Jord{\'a}n.
\newblock Globally rigid graphs are fully reconstructible.
\newblock {\em arXiv preprint arXiv:2105.04363}, 2021.

\bibitem{gj}
D{\'a}niel Garamv{\"o}lgyi and Tibor Jord{\'a}n.
\newblock Graph reconstruction from unlabeled edge lengths.
\newblock {\em Discrete \& Computational Geometry}, pages 1--42, 2021.

\bibitem{ght}
Steven~J. Gortler, Alexander~D. Healy, and Dylan~P. Thurston.
\newblock Characterizing generic global rigidity.
\newblock {\em Amer. J. Math.}, 132(4):897--939, 2010.

\bibitem{gtt}
Steven~J. Gortler, Louis Theran, and Dylan~P. Thurston.
\newblock Generic unlabeled global rigidity.
\newblock In {\em Forum of Mathematics, Sigma}, volume~7. Cambridge University
  Press, 2019.

\bibitem{M2}
Daniel~R. Grayson and Michael~E. Stillman.
\newblock Macaulay2, a software system for research in algebraic geometry.
\newblock Available at \url{http://www.math.uiuc.edu/Macaulay2/}.

\bibitem{rosen2020algebraic}
Zvi Rosen, Jessica Sidman, and Louis Theran.
\newblock Algebraic matroids in action.
\newblock {\em The American Mathematical Monthly}, 127(3):199--216, 2020.

\end{thebibliography}

\end{document}